\newcommand{\Pic}{\rm Pic\,}
\newcommand{\proset}{\,\mathrel{\lower 4pt\hbox{$\scriptscriptstyle/$}
\mkern -14mu\subseteq }\,} 
 \newtheorem{theorem}{Theorem}[section]
  \newtheorem{corollary}[theorem]{Corollary}
 \newtheorem{lemma}[theorem]{Lemma}
\newtheorem{remark}[theorem]{Remark}
 \newtheorem{example}[theorem]{Example}
\numberwithin{equation}{section}
\def\Br{\operatorname{Br}}
\begin{document}
\date{\today}

\title{The Brauer group of a valuation ring}
 \author{Vivek Sadhu} 
 
 \address{Department of Mathematics, Indian Institute of Science Education and Research, Bhopal, Bhopal Bypass Road, Bhauri, Bhopal-462066, Madhya Pradesh, India}
 \email{vsadhu@iiserb.ac.in, viveksadhu@gmail.com}
 \keywords{Brauer groups, Valuation rings, \'Etale cohomology}
 \subjclass{14F20, 14F22, 13F30}
 \thanks{Author was supported by SERB-DST MATRICS grant MTR/2018/000283}

 \begin{abstract}
  Let $V$ be a valuation ring and $K$ be its field of fraction. We show that the canonical map $\Br(V) \to \Br(K)$ is injective.
 \end{abstract}

\maketitle
 
\section{Introduction}
In recent years, it has been observed by various authors that a valuation ring (not necessarily Noetherian) behaves almost like regular ring in many situations. One can see \cite{KM} for some $K$-theoretic evidence. In this article, we give some evidence for Brauer groups. 
The Brauer group $\Br(R)$ of a commutative ring $R$ consists of Morita equivalence classes of Azumaya algebras over $R$ (see \cite{AG}, \cite{Milne}). The group operation on $\Br(R)$ is $\otimes_{R}.$ An element of $\Br(R)$ is represented by a class $[A],$ where $A$ is an Azumaya algebra over $R.$

A classical result of Auslander and Goldman says that for a regular noetherian domain $R$ with the field of fraction $K,$ the canonical map $\Br(R) \to \Br(K)$ sending $[A]$ to $[A\otimes_{R} K]$ is injective (see Theorem 7.2 of \cite{AG}). We observe that the same is true for valuation rings (see below Theorem \ref{ger inj}). A subring $V$ of a field $K$ is said to be valuation ring if for each nonzero element $a\in K,$ either $a\in V$ or $a^{-1}\in V.$ Clearly, $K$ itself a valuation ring and we  call it a trivial valuation ring of $K.$ If $V\neq K$ then we say that $V$ is a nontrivial valuation ring of $K.$ Every valuation ring is a local normal domain. If $\mathfrak{p}$ is a prime ideal of a valuation ring $V$ then the quotient $V/\mathfrak{p}$ and the localization $V_{\mathfrak{p}}$ both are valuation rings. The rank of a valuation ring is equal to its krull dimension. Here is our main result.
\begin{theorem}\label{ger inj}
Let $V$ be a valuation ring with the field of fraction $K.$ Then the canonical map $\Br(V) \to \Br(K)$ is injective.  
 \end{theorem}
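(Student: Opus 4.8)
The plan is to argue directly with $V$-orders in the spirit of Auslander and Goldman, exploiting that a valuation ring is local and that every finitely generated torsion-free module over it is free. Let $A$ be an Azumaya $V$-algebra representing a class in the kernel of $\Br(V)\to\Br(K)$. Since $V$ is local and connected, $A$ is free of rank $n^2$ for some $n$, and $A_K:=A\otimes_V K$ is a central simple $K$-algebra of degree $n$. The hypothesis $[A_K]=0$ in $\Br(K)$ means $A_K\cong M_n(K)=\operatorname{End}_K(W)$ for the simple module $W=K^{\,n}$. Because finitely generated projective $V$-modules are free, it suffices to produce an isomorphism $A\cong M_n(V)$; this is what I would do.

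First I would realize $A$ as the endomorphism ring of a lattice. Fix $0\neq w_0\in W$ and set $L:=A\cdot w_0\subseteq W$. As the image of the finitely generated $V$-module $A$ under $a\mapsto aw_0$, the module $L$ is finitely generated; being a submodule of the $K$-vector space $W$ it is torsion-free, hence free over the valuation ring $V$. Since $A_K w_0=W$ (as $W$ is simple) we get $L\otimes_V K=W$, so $L\cong V^{\,n}$ and $\operatorname{End}_V(L)\cong M_n(V)$. Left multiplication gives a $V$-algebra map $\phi\colon A\to\operatorname{End}_V(L)$; it is injective because any $a\in\ker\phi$ kills the $K$-spanning set $L$ of $W$, hence vanishes in $\operatorname{End}_K(W)=A_K$. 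Thus $\phi$ identifies $A$ with a sub-$V$-order of $B:=\operatorname{End}_V(L)$ inside $A_K$, and $\phi_K$ is an isomorphism since $A_K\to\operatorname{End}_K(W)$ is the (faithful, dimension-matching) action of a central simple algebra on its simple module.

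The heart of the argument is then to upgrade the inclusion $\phi(A)\subseteq B$ to an equality, i.e.\ to show that $A$ is a \emph{maximal} $V$-order. For this I would use the reduced trace pairing $T(x,y)=\operatorname{trd}(xy)$ on $A_K$. Since $A$ is Azumaya (equivalently, central separable), this pairing is unimodular on $A$: in a $V$-basis its Gram matrix has unit determinant, and transporting this along the $V$-algebra isomorphism $\phi$ shows that the codifferent $\phi(A)^{\#}=\{x\in A_K:\operatorname{trd}(x\,\phi(A))\subseteq V\}$ equals $\phi(A)$. Now take any $b\in B$. For every $a\in A$ we have $b\,\phi(a)\in B=\operatorname{End}_V(L)$, and the reduced trace of an endomorphism of the free $V$-module $L$ lies in $V$; hence $\operatorname{trd}(b\,\phi(a))\in V$ for all $a$, so $b\in\phi(A)^{\#}=\phi(A)$. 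Therefore $B\subseteq\phi(A)\subseteq B$, giving $A\cong B\cong M_n(V)$ and $[A]=0$ in $\Br(V)$.

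I expect the main obstacle to be the maximal-order step, and more precisely the two inputs that make it work over a possibly non-Noetherian valuation ring: the unimodularity of the reduced trace form (this is exactly where the Azumaya hypothesis is used in full, and must be proved without appealing to a Dedekind or Noetherian base), and the freeness of the finitely generated torsion-free lattice $L$ (this is exactly where the valuation-ring hypothesis is essential, and would fail for a general normal domain, which is why the classical regular case of Auslander--Goldman instead needs the more delicate codimension-one reflexivity argument). An alternative, more cohomological route would be to combine the functorial injection $\Br(V)\hookrightarrow H^{2}_{\mathrm{et}}(V,\mathbb G_m)$ with the identification $\Br(K)=H^{2}_{\mathrm{et}}(K,\mathbb G_m)$ and reduce to injectivity of $H^{2}_{\mathrm{et}}(V,\mathbb G_m)\to H^{2}_{\mathrm{et}}(K,\mathbb G_m)$; but I would expect the order-theoretic argument above to be the cleaner path.
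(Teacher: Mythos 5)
Your argument is correct, and it takes a genuinely different route from the paper. You work directly with Azumaya algebras: the two inputs you isolate --- that a finitely generated torsion-free module over a valuation ring is free (valuation rings are B\'ezout domains), and that the reduced trace form of an Azumaya algebra is unimodular (checked after a splitting \'etale cover and descended, with no Noetherian hypothesis) --- are both standard, and together they make the lattice-plus-maximal-order argument go through in one stroke, uniformly in the rank of $V$. The paper instead proceeds cohomologically: it first reduces to valuation rings of finite rank by writing $V$ as a filtered colimit of the rings $V\cap K_i$ over finitely generated subfields $K_i$, then inducts on the rank. The rank-one case is handled via the Leray spectral sequence for $\operatorname{Spec}(K)\to\operatorname{Spec}(V)$, reducing the statement to the vanishing of $H^{1}_{et}(\operatorname{Spec}(V), f_{*}\mathbb{G}_{m,K}/\mathbb{G}_{m,V})$, which is identified with a Galois cohomology group $H^{1}(G_{k},\Gamma)$ of the value group $\Gamma$ of $V^{sh}$; this vanishes because $G_k$ acts trivially and $\Gamma$ is torsion-free. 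The inductive step uses that $V\to V_{\mathfrak{p}}\times V/\mathfrak{p}\to k(\mathfrak{p})$ is a Milnor square and the Knus--Ojanguren Mayer--Vietoris sequence for Brauer groups. Your approach is shorter and avoids both the limit/induction scaffolding and all sheaf cohomology; the paper's approach yields extra information along the way that your argument does not, namely that for one-dimensional $V$ the group $H^{2}_{et}(\operatorname{Spec}(V),\mathbb{G}_{m,V})$ is torsion and coincides with $\Br(V)$ (Corollary \ref{one dim situation}(1),(2)). One point worth making explicit if you write this up: the functional $\phi(a)\mapsto\operatorname{trd}(x\phi(a))$ for $x$ in the codifferent is represented by an element of $\phi(A)$ by unimodularity, and one then needs nondegeneracy of the trace form on the central simple algebra $A_K$ to conclude $x$ equals that representative; this is immediate but is the step where $\phi(A)^{\#}=\phi(A)$ actually gets proved.
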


 {\bf Acknowledgement:} The author would like to thank Prof. Jean-Louis Colliot Thelene for his valuable comments on the first draft of this article.
 \section{One dimensional case}\label{dim 1 case}
 In this section, we prove Theorem \ref{ger inj} in the case when $V$ is a one dimensional valuation ring with the field of fraction $K.$ Throughout this section, let $V$ be a one dimensional valuation ring with the field of fraction $K$ and $f: {\rm {\rm Spec}}(K) \to {\rm {\rm Spec}}(V)$ be the map of schemes induced by the inclusion $V \hookrightarrow K.$ Given a ring $R,$ $R^{\times}$ always denotes the group of units.
 
 The Leray spectral sequence of the map $f: {\rm {\rm Spec}}(K) \to {\rm {\rm Spec}}(V)$ gives  \tiny
  \begin{equation}\label{5 term}
  \begin{split}
   0\to H^{1}_{et}({\rm Spec}(V), f_{*}\mathbb{G}_{m, K}) \to H^{1}_{et}({\rm Spec}(K), \mathbb{G}_{m, K}) \to H^{0}_{et}({\rm Spec}(V), R^{1}f_{*}\mathbb{G}_{m, K})  \\ \to H^{2}_{et}({\rm Spec}(V), f_{*}\mathbb{G}_{m, K}) \to H^{2}_{et}({\rm Spec}(K), \mathbb{G}_{m, K}), \end{split}
  \end{equation}\normalsize
where $\mathbb{G}_{m}$ denotes the \'etale unit sheaf.
  We know $H^{1}_{et}({\rm Spec}(K), \mathbb{G}_{m, K})\cong\Pic({\it K})=0$ and $H^{2}_{et}({\rm Spec}(K), \mathbb{G}_{m, K})\cong \Br(K).$ The stalk of $R^1f_{*}\mathbb{G}_{m, K}$ at a geometric point $\bar{v}$ is $H^{1}_{et}({\rm Spec}(K_{\bar{v}}), \mathbb{G}_{m, K_{\bar{v}}})\cong \Pic({\it K_{\bar{v}}})$ where $K_{\bar{v}}= K\otimes_{V} \mathcal{O}_{V, \bar{v}}.$ Since $\dim (V)=1,$ $\mathcal{O}_{V, \bar{v}}$ is either $K^{sep}$ or $V^{sh},$ where $K^{sep}$ is any separable closure of $K.$ Moreover, $V^{sh}$ is a normal domain of dimension one (see \cite[Tag 07QL]{sp}). Thus, $K_{\bar{v}}$ is either $K^{sep}$ or the field of fraction of $V^{sh}.$ Therefore, $R^1f_{*}\mathbb{G}_{m, K}=0.$ By (\ref{5 term}) we get
  \begin{equation}\label{after calculation}
   H^{1}_{et}({\rm Spec}(V), f_{*}\mathbb{G}_{m, K})=0 ~{\rm and}~    0 \to H^{2}_{et}({\rm Spec}(V), f_{*}\mathbb{G}_{m, K}) \to \Br(K).
  \end{equation}
  Now, the long exact sequence associated to the exact sequence 
                                                                                                                                                                                                                                                $$1 \to \mathbb{G}_{m, V} \to f_{*}\mathbb{G}_{m, K} \to f_{*}\mathbb{G}_{m, K}/\mathbb{G}_{m, V} \to 1$$ of \'etale sheaves on ${\rm {\rm Spec}}(V)$ implies (using (\ref{after calculation}) and an wellknown fact that the Picard group of a local ring is trivial) 
              \begin{align}\label{seq 0}
               H^{0}_{et}({\rm Spec}(V), f_{*}\mathbb{G}_{m, K}/\mathbb{G}_{m, V})\cong K^{\times}/V^{\times}
              \end{align}
           and                                                                                                                                                                                                                        \begin{align}\label{seq2}
    0\to H^{1}_{et}({\rm Spec}(V), f_{*}\mathbb{G}_{m, K}/\mathbb{G}_{m, V})\to H^{2}_{et}({\rm Spec}(V), \mathbb{G}_{m, V})\to H^{2}_{et}({\rm Spec}(V), f_{*}\mathbb{G}_{m, K}).                                                                                                                                                                                                                                                                                                                                                                                                  
                                                                                                                                                                                                                                    \end{align}
        Our first goal is to prove the following result:
        \begin{theorem}\label{first coho of quotiont sheaf vanish}
         Let $V$ be a one dimensional valuation ring with the field of fraction $K.$ Let $f: {\rm {\rm Spec}}(K) \to {\rm {\rm Spec}}(V)$ be the map of schemes induced by the inclusion $V \hookrightarrow K.$ Then $H^{1}_{et}({\rm Spec}(V), f_{*}\mathbb{G}_{m, K}/\mathbb{G}_{m, V})=0.$
        \end{theorem}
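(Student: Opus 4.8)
The plan is to show that $\mathcal{Q}:=f_{*}\mathbb{G}_{m,K}/\mathbb{G}_{m,V}$ is a skyscraper sheaf concentrated at the closed point and then to identify its first cohomology with a Galois cohomology group. Write $k=V/\mathfrak{m}$ for the residue field, let $i:{\rm Spec}(k)\hookrightarrow{\rm Spec}(V)$ be the closed point and $j:{\rm Spec}(K)\hookrightarrow{\rm Spec}(V)$ the generic point. The stalk computation carried out above shows that at the generic geometric point the map $\mathbb{G}_{m,V}\to f_{*}\mathbb{G}_{m,K}$ is an isomorphism of stalks, both being $(K^{sep})^{\times}$ since $K\otimes_{V}K^{sep}=K^{sep}$; hence $j^{*}\mathcal{Q}=0$. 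Feeding this into the canonical exact sequence $0\to j_{!}j^{*}\mathcal{Q}\to\mathcal{Q}\to i_{*}i^{*}\mathcal{Q}\to 0$ of \'etale sheaves gives $\mathcal{Q}\cong i_{*}i^{*}\mathcal{Q}$. Since $i$ is a closed immersion, $i_{*}$ is exact and admits the exact left adjoint $i^{*}$, so it preserves injectives; therefore the Grothendieck spectral sequence collapses and $H^{1}_{et}({\rm Spec}(V),\mathcal{Q})\cong H^{1}_{et}({\rm Spec}(k),i^{*}\mathcal{Q})\cong H^{1}(G_{k},M)$, where $G_{k}={\rm Gal}(k^{sep}/k)$ and $M=\mathcal{Q}_{\bar v}$ is the stalk at the closed geometric point.

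Next I would pin down the $G_{k}$-module $M$. By the stalk computation above (the closed-point case, where $\mathcal{O}_{V,\bar v}=V^{sh}$ and $K\otimes_{V}V^{sh}={\rm Frac}(V^{sh})$), one has $M=L^{\times}/(V^{sh})^{\times}$ with $L={\rm Frac}(V^{sh})$. Two properties of $M$ are needed. First, $M$ is torsion-free: if $x\in L^{\times}$ satisfies $x^{n}\in(V^{sh})^{\times}$, then $x$ and $x^{-1}$ are integral over $V^{sh}$ (roots of $T^{n}-x^{n}$ and $T^{n}-x^{-n}$, each with unit constant term), so both lie in $V^{sh}$ by normality, whence $x\in(V^{sh})^{\times}$. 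Second, the $G_{k}$-action on $M$ is trivial: $V^{sh}$ is a strictly Henselian valuation ring, so its valuation $w$ is the unique extension to $L$ of the Henselian valuation on ${\rm Frac}(V^{h})$, where $V^{h}$ is the Henselization; for any $\sigma\in G_{k}={\rm Aut}(V^{sh}/V^{h})$ the valuation $w\circ\sigma$ also extends it, so $w\circ\sigma=w$, and $\sigma$ preserves valuations, i.e. acts trivially on $M=w(L^{\times})$.

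Finally, with $M$ a torsion-free trivial $G_{k}$-module, $H^{1}(G_{k},M)={\rm Hom}_{\rm cont}(G_{k},M)$. Any continuous homomorphism from the profinite group $G_{k}$ to the discrete group $M$ has compact, hence finite, image, and a finite subgroup of the torsion-free group $M$ is trivial; thus every such homomorphism is zero and $H^{1}(G_{k},M)=0$. Combining the three steps yields $H^{1}_{et}({\rm Spec}(V),f_{*}\mathbb{G}_{m,K}/\mathbb{G}_{m,V})=0$. The main obstacle is the middle step: correctly identifying the stalk $M$ and verifying that $G_{k}$ acts trivially on it. Once the uniqueness of the extension of the valuation to $V^{sh}$ (equivalently, the fact that $V^{sh}$ is again a valuation ring) is established, the skyscraper reduction and the final cohomological vanishing are formal.
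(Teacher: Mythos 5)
Your proposal is correct and follows the same overall architecture as the paper: reduce to a skyscraper sheaf at the closed point, identify $H^{1}_{et}$ with $H^{1}(G_{k},M)$ for $M=K(V^{sh})^{\times}/(V^{sh})^{\times}$, show $M$ is a torsion-free trivial $G_{k}$-module, and conclude that continuous homomorphisms from the profinite group $G_{k}$ to $M$ vanish. The two middle sub-steps, however, are argued genuinely differently. For torsion-freeness the paper cites Bourbaki to identify $M$ with a subgroup of $\mathbb{R}$ (using rank one), whereas your integrality argument via normality of $V^{sh}$ is more elementary and does not use $\dim V=1$ at all. For triviality of the $G_{k}$-action the paper argues indirectly: it computes $H^{0}_{et}({\rm Spec}(V),\mathcal{I})\cong K^{\times}/V^{\times}$ from the long exact sequence and matches this against $M^{G_{k}}$, using the Stacks Project fact that $V^{sh}$ is a valuation ring with the same value group as $V$, so that $M^{G_{k}}=M$. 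Your argument via uniqueness of the extension of a henselian valuation is more direct and self-contained, but note one point of care: uniqueness of the extension is, in the first instance, uniqueness of the valuation \emph{ring}, which only gives $\sigma(V^{sh})=V^{sh}$; to conclude that the induced automorphism of the value group $M$ is the identity you should either take the valuation with values in the divisible hull of the value group of $V^{h}$ (where uniqueness of $w$ as a map does hold, since $M$ embeds there and $\sigma$ fixes the base value group), or simply invoke the same Stacks Project lemma the paper uses, namely that $V^{h}\subseteq V^{sh}$ induces an isomorphism on value groups. With that point made precise, your proof is complete, and arguably cleaner in that both key properties of $M$ are verified by hand rather than by comparison with the $H^{0}$ computation.
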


         We need some preparations to prove Theorem \ref{first coho of quotiont sheaf vanish}. Let us begin with a definition.
        
        Let $S$ be any scheme. The support of an abelian sheaf $\mathcal{F}$ on $S_{et}$ is the set of points $s\in S$ such that $\mathcal{F}_{\bar{s}}\neq 0$ for any geometric point $\bar{s}$ lying over $s$. It is denoted by ${\rm Supp}(\mathcal{F}).$
        
        \begin{lemma}\label{support}
         Suppose $V$, $K$ and $f$ are as in Theorem \ref{first coho of quotiont sheaf vanish}. Let $\mathfrak{m}$ denote the unique maximal ideal of $V$. Then ${\rm Supp}(f_{*}\mathbb{G}_{m, K}/\mathbb{G}_{m, V})$ is ${\rm Spec}(V/\mathfrak{m}).$
        \end{lemma}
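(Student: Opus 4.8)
The plan is to compute the stalk of the quotient sheaf $\mathcal{Q} := f_{*}\mathbb{G}_{m, K}/\mathbb{G}_{m, V}$ at every point of $\operatorname{Spec}(V)$ and read off where it is nonzero. Since $\dim V = 1$, the scheme $\operatorname{Spec}(V)$ has exactly two points: the generic point $\eta$ (with $\mathcal{O}_{V,\eta}=K$) and the closed point $x$, whose reduced closed subscheme is $\operatorname{Spec}(V/\mathfrak{m})$. Because taking stalks at a geometric point is exact on abelian \'etale sheaves, the defining short exact sequence $1\to\mathbb{G}_{m,V}\to f_{*}\mathbb{G}_{m,K}\to\mathcal{Q}\to 1$ yields $\mathcal{Q}_{\bar{s}}\cong (f_{*}\mathbb{G}_{m,K})_{\bar{s}}/(\mathbb{G}_{m,V})_{\bar{s}}$ for every geometric point $\bar{s}$, so it suffices to compute these two stalks.

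Fix a geometric point $\bar{s}$ over $s$ and, as in the computation preceding the lemma, write $\mathcal{O}_{V,\bar{s}}$ for the strictly henselized local ring; it is a domain, being either $K^{sep}$ or $V^{sh}$. Then $(\mathbb{G}_{m,V})_{\bar{s}}=\mathcal{O}_{V,\bar{s}}^{\times}$. Since $f$ is affine, hence quasi-compact and quasi-separated, the stalk of the pushforward is computed on the pulled-back geometric local ring: $(f_{*}\mathbb{G}_{m,K})_{\bar{s}}=\Gamma\bigl(\operatorname{Spec}(K)\times_{\operatorname{Spec}(V)}\operatorname{Spec}(\mathcal{O}_{V,\bar{s}}),\mathbb{G}_{m}\bigr)=(K\otimes_{V}\mathcal{O}_{V,\bar{s}})^{\times}$. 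The first technical step is to identify $K\otimes_{V}\mathcal{O}_{V,\bar{s}}$. Writing $K=(V\setminus\{0\})^{-1}V$ gives $K\otimes_{V}\mathcal{O}_{V,\bar{s}}=(V\setminus\{0\})^{-1}\mathcal{O}_{V,\bar{s}}$, a localization of the one-dimensional domain $\mathcal{O}_{V,\bar{s}}$. Any nonzero element of $\mathfrak{m}$ maps into the maximal ideal of $\mathcal{O}_{V,\bar{s}}$ (the structure map is local and injective) and is inverted, so every nonzero prime is killed and $K\otimes_{V}\mathcal{O}_{V,\bar{s}}=\operatorname{Frac}(\mathcal{O}_{V,\bar{s}})$. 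Under this identification the stalk map is the natural inclusion $\mathcal{O}_{V,\bar{s}}^{\times}\hookrightarrow\operatorname{Frac}(\mathcal{O}_{V,\bar{s}})^{\times}$, whence $\mathcal{Q}_{\bar{s}}\cong\operatorname{Frac}(\mathcal{O}_{V,\bar{s}})^{\times}/\mathcal{O}_{V,\bar{s}}^{\times}$.

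It remains to evaluate this at the two points. At $\eta$ we have $\mathcal{O}_{V,\bar{\eta}}=K^{sep}$, a field, so $\operatorname{Frac}(\mathcal{O}_{V,\bar{\eta}})^{\times}/\mathcal{O}_{V,\bar{\eta}}^{\times}=0$ and $\eta\notin\operatorname{Supp}(\mathcal{Q})$. At $x$ we have $\mathcal{O}_{V,\bar{x}}=V^{sh}$, which by \cite[Tag 07QL]{sp} is a normal local domain of dimension one; in particular it is not a field, so there is a nonzero $t$ in its maximal ideal, and the class of $t$ in $\operatorname{Frac}(V^{sh})^{\times}/(V^{sh})^{\times}$ is nontrivial. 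Hence $\mathcal{Q}_{\bar{x}}\neq 0$ and $x\in\operatorname{Supp}(\mathcal{Q})$. Combining the two computations gives $\operatorname{Supp}(\mathcal{Q})=\{x\}=\operatorname{Spec}(V/\mathfrak{m})$.

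The step I expect to require the most care is the identification of $(f_{*}\mathbb{G}_{m,K})_{\bar{s}}$: one must justify the stalk-of-pushforward formula (legitimate here because $f$ is affine) and then verify that $K\otimes_{V}\mathcal{O}_{V,\bar{s}}=\operatorname{Frac}(\mathcal{O}_{V,\bar{s}})$, i.e. that base-changing the generic point along the strictly henselized local ring again produces only the generic point of $\operatorname{Spec}(\mathcal{O}_{V,\bar{s}})$. Once the two stalks are in hand the remainder is purely formal.
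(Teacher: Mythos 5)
Your proposal is correct and follows essentially the same route as the paper: compute the stalk $(K\otimes_V\mathcal{O}_{V,\bar v})^{\times}/\mathcal{O}_{V,\bar v}^{\times}$ at each of the two points of $\operatorname{Spec}(V)$, observe it vanishes at the generic point, and use that $V^{sh}$ is a one-dimensional local domain (hence not a field) to see the stalk at the closed point is nonzero. You merely spell out in more detail the identification of the pushforward stalk and of $K\otimes_V V^{sh}$ with $\operatorname{Frac}(V^{sh})$, which the paper takes for granted.
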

        
        \begin{proof}
         Write $\mathcal{I}$ for the \'etale sheaf $f_{*}\mathbb{G}_{m, K}/\mathbb{G}_{m, V}.$ The stalk of $\mathcal{I}$ at a geometric point $\bar{v}$ is $\mathcal{I}_{\bar{v}}=  (K\otimes_{V} \mathcal{O}_{V, \bar{v}})^{\times}/\mathcal{O}_{V, \bar{v}}^{\times}.$ Since $\dim{V}=1,$ ${\rm Spec}(V)=\{\xi, \mathfrak{m}\}$ where $\xi$ is the generic point of ${\rm Spec}(V).$  Then $\mathcal{I}_{\bar{\xi}}=0$ and $\mathcal{I}_{\bar{\mathfrak{m}}}= K(V^{sh})^{\times}/(V^{sh})^{\times},$ where $K(V^{sh})$ denotes the field of fraction of $V^{sh}.$ Note that $K(V^{sh})^{\times}\neq (V^{sh})^{\times}$ because $V^{sh}$ is a one dimensional local domain. Thus, the support of $\mathcal{I}$ is ${\rm Spec}(V/\mathfrak{m}).$
        \end{proof}
 Let $k$ denote the residue field $V/\mathfrak{m}.$ We have a closed immersion $i: {\rm Spec}(k) \to {\rm Spec}(V).$ Write $\mathcal{I}$ for  $f_{*}\mathbb{G}_{m, K}/\mathbb{G}_{m, V}$ as before. By Lemma \ref{support}, ${\rm Supp}(\mathcal{I})= {\rm Spec}(k).$ Then Proposition 58.46.4 of \cite[Tag 04E1]{sp} implies that $i_{*}(i^{-1}\mathcal{I})\cong \mathcal{I}.$ Here $i^{-1}\mathcal{I}$ is a sheaf on $({\rm Spec}(k))_{et}.$ Set $\mathcal{I}^{'}= i^{-1}\mathcal{I}.$ Since $i_{*}$ is an exact functor, $R^{q}i_{*}\mathcal{I}^{'}=0$ for all $q>0.$ The spectral sequence $$H_{et}^{p}({\rm Spec}(V), R^{q}i_{*}\mathcal{I}^{'})\Rightarrow H_{et}^{p+q}({\rm Spec}(k), \mathcal{I}^{'})$$
 gives the following:
 \begin{lemma}\label{cohomology at point}
  With the above notations, there is an isomorphism of groups $$H_{et}^{j}({\rm Spec}(V), \mathcal{I})\cong H_{et}^{j}({\rm Spec}(k), i^{-1}\mathcal{I})~ {\rm for~ all}~ j\geq 0.$$
 \end{lemma}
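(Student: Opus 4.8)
The plan is to read off the isomorphism directly from the Leray spectral sequence of the closed immersion $i$ by showing that it degenerates, and the essential input is that the pushforward $i_{*}$ along a closed immersion is an exact functor on abelian \'etale sheaves. Indeed, a closed immersion is a finite morphism, and direct image along a finite morphism is exact in the \'etale topology; consequently $R^{q}i_{*}\mathcal{I}^{'}=0$ for every $q>0$, as already recorded in the discussion preceding the statement. This vanishing is the one genuinely non-formal ingredient, so I would make sure it is cited cleanly (e.g. via the exactness of pushforward along finite morphisms) rather than reproved.

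With the higher direct images gone, I would examine the spectral sequence
$$E_{2}^{p,q}=H_{et}^{p}({\rm Spec}(V), R^{q}i_{*}\mathcal{I}^{'})\Rightarrow H_{et}^{p+q}({\rm Spec}(k), \mathcal{I}^{'}).$$
Since $R^{q}i_{*}\mathcal{I}^{'}=0$ for $q>0$, the $E_{2}$-page is concentrated in the single row $q=0$. A spectral sequence supported in one row has no nonzero differentials touching the terms $E_{2}^{p,0}$: for $r\geq 2$ the outgoing differential $d_{r}\colon E_{r}^{p,0}\to E_{r}^{p+r,1-r}$ lands in a group with negative second index, and the incoming differential $d_{r}\colon E_{r}^{p-r,r-1}\to E_{r}^{p,0}$ originates from a group in a positive row; both targets vanish. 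Hence $E_{\infty}^{p,0}=E_{2}^{p,0}$, and as this is the only nonzero graded piece in total degree $p$, the abutment filtration collapses to a single step, giving
$$H_{et}^{p}({\rm Spec}(V), i_{*}\mathcal{I}^{'})\cong H_{et}^{p}({\rm Spec}(k), \mathcal{I}^{'})\quad\text{for all }p\geq 0.$$

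Finally, I would substitute the identification $i_{*}\mathcal{I}^{'}=i_{*}(i^{-1}\mathcal{I})\cong\mathcal{I}$, which holds by Lemma \ref{support} together with Proposition 58.46.4 of \cite[Tag 04E1]{sp}, to rewrite the left-hand side as $H_{et}^{p}({\rm Spec}(V), \mathcal{I})$; recalling $\mathcal{I}^{'}=i^{-1}\mathcal{I}$ on the right, this is precisely the asserted isomorphism $H_{et}^{j}({\rm Spec}(V), \mathcal{I})\cong H_{et}^{j}({\rm Spec}(k), i^{-1}\mathcal{I})$ for all $j\geq 0$. I expect the only point requiring care to be the exactness of $i_{*}$; everything after it is a formal collapse, and in fact the same argument works verbatim for any \'etale sheaf supported on the closed point, a robustness worth keeping in mind for the higher-dimensional induction later in the paper.
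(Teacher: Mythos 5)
Your proposal is correct and follows essentially the same route as the paper: the preceding discussion there already records $i_{*}(i^{-1}\mathcal{I})\cong\mathcal{I}$, the exactness of $i_{*}$, and the resulting collapse of the Leray spectral sequence, which is exactly your argument with the degeneration spelled out in more detail. Your justification of exactness via finiteness of closed immersions is a clean way to supply the one input the paper asserts without citation.
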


  The absolute Galois group of $k$ is ${\rm Gal}(k^{sep}/k)$ and is denoted by $G_{k}.$ Here $k^{sep}$ is the separable closure of $k$ in the algebraically closed field $\bar{k}.$ Note that $(i^{-1}\mathcal{I})_{\bar{\mathfrak{m}}}=\mathcal{I}_{\bar{\mathfrak{m}}}= K(V^{sh})^{\times}/(V^{sh})^{\times}.$ Moreover, $K(V^{sh})^{\times}/(V^{sh})^{\times}$ is a $G_{k}$-module (see \cite[Tag 03QW]{sp}). So, we can consider its Galois cohomology group $H^{j}(G_{k}, K(V^{sh})^{\times}/(V^{sh})^{\times})$ for $j\geq 0.$ Some details related to Galois cohomology can be found in \cite{GS} and \cite{serre}. Given a $G$-module $M,$ the invariant subgroup $M^{G}=\{x\in M|~ g.x=x ~{\rm for~ all}~ g\in G\}.$ Here $G$ is any group. Now, Lemma 58.58.2 of \cite[Tag 03QQ]{sp} implies the following:
  
  \begin{lemma}\label{cohomology of pt vs galois} With the above notations, we have 
  \begin{enumerate}
   \item $H_{et}^{0}({\rm Spec}(k), i^{-1}\mathcal{I})= (K(V^{sh})^{\times}/(V^{sh})^{\times})^{G_{k}},$
   \item $H_{et}^{j}({\rm Spec}(k), i^{-1}\mathcal{I})= H^{j}(G_{k}, K(V^{sh})^{\times}/(V^{sh})^{\times}) ~{\rm for}~ j>0.$
  \end{enumerate}
\end{lemma}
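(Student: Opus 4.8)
The plan is to reduce both assertions to the standard equivalence between abelian sheaves on the small \'etale site of a field and discrete Galois modules. Fix a geometric point $\bar{\mathfrak{m}}: {\rm Spec}(k^{sep}) \to {\rm Spec}(k).$ The stalk functor at $\bar{\mathfrak{m}}$ gives an exact equivalence between the category of abelian sheaves on $({\rm Spec}(k))_{et}$ and the category of discrete $G_{k}$-modules; it carries $i^{-1}\mathcal{I}$ to $M := (i^{-1}\mathcal{I})_{\bar{\mathfrak{m}}} = K(V^{sh})^{\times}/(V^{sh})^{\times}$ equipped with its natural $G_{k}$-action. The discreteness of this action is automatic, being part of the equivalence, so nothing extra has to be checked on that score.

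For part (1), I would observe that under this equivalence the global sections functor $\Gamma({\rm Spec}(k), -)$ corresponds to the invariants functor $(-)^{G_{k}}.$ Applying this to $i^{-1}\mathcal{I}$ gives at once $H_{et}^{0}({\rm Spec}(k), i^{-1}\mathcal{I}) = M^{G_{k}},$ which is the asserted formula.

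For part (2), since the equivalence is exact it identifies the right derived functors of these two left-exact functors. By definition the right derived functors of $\Gamma({\rm Spec}(k), -)$ are \'etale cohomology, while those of $(-)^{G_{k}}$ are Galois cohomology, so $H_{et}^{j}({\rm Spec}(k), i^{-1}\mathcal{I}) \cong H^{j}(G_{k}, M)$ for every $j \geq 0,$ and in particular for $j > 0.$ This is precisely the content of Lemma 58.58.2 of \cite[Tag 03QQ]{sp}. I do not expect any serious obstacle here: the statement is a formal consequence of the sheaf--Galois-module dictionary, and the only thing genuinely relied upon is that this dictionary is an \emph{exact} equivalence, so that it transports the universal $\delta$-functor of \'etale cohomology to that of Galois cohomology in all degrees at once, rather than merely in degree zero.
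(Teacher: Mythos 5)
Your proposal is correct and is essentially identical to the paper's argument: the paper proves this lemma by directly invoking Lemma 58.58.2 of \cite[Tag 03QQ]{sp}, i.e.\ the exact equivalence between abelian sheaves on $({\rm Spec}(k))_{et}$ and discrete $G_{k}$-modules, under which global sections correspond to $G_{k}$-invariants and \'etale cohomology to Galois cohomology. Your unwinding of why that citation gives both parts (exactness of the equivalence identifying the derived functors) is just a slightly more explicit version of the same step.
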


By combining the above two lemmas, we obtain
\begin{align}\label{global section}
 H_{et}^{0}({\rm Spec}(V), \mathcal{I})\cong H_{et}^{0}({\rm Spec}(k), i^{-1}\mathcal{I})=(K(V^{sh})^{\times}/(V^{sh})^{\times})^{G_{k}},
\end{align}

\begin{align}\label{higher cohomology}
 H_{et}^{j}({\rm Spec}(V), \mathcal{I})\cong H_{et}^{j}({\rm Spec}(k), i^{-1}\mathcal{I})= H^{j}(G_{k}, K(V^{sh})^{\times}/(V^{sh})^{\times}) ~{\rm for}~ j>0.
\end{align}

We are now ready to prove Theorem \ref{first coho of quotiont sheaf vanish}.

{\it Proof of Theorem \ref{first coho of quotiont sheaf vanish}:} The strict henselization $V^{sh}$ of $V$ is also a one dimensional valuation ring (see Lemma 15.118.5 of \cite[Tag 0ASF]{sp}). Recall $K(V^{sh})$ is the field of fraction of $V^{sh}.$ Then, we have $K(V^{sh})^{\times}/(V^{sh})^{\times}\cong K^{\times}/V^{\times}\cong (K(V^{sh})^{\times}/(V^{sh})^{\times})^{G_{k}},$ where the first isomorphism by Lemma 15.118.5 of \cite[Tag 0ASF]{sp} and the second isomorphism follows from (\ref{seq 0}) and (\ref{global section}). Hence, $G_{k}$ acts trivially on $K(V^{sh})^{\times}/(V^{sh})^{\times}.$ By (\ref{higher cohomology}), we also have $H_{et}^{1}({\rm Spec}(V), f_{*}\mathbb{G}_{m, K}/\mathbb{G}_{m, V})\cong H^{1}(G_{k}, K(V^{sh})^{\times}/(V^{sh})^{\times}).$ So, it is enough to show that $ H^{1}(G_{k}, K(V^{sh})^{\times}/(V^{sh})^{\times})=0.$ Since $\dim{V^{sh}}=1,$ the value group $K(V^{sh})^{\times}/(V^{sh})^{\times}$ is isomorphic to a non-trivial additive subgroup of $\mathbb{R}$ (see chapter VI, section 4.5, Proposition 8 of \cite{Bou}). In fact, $K(V^{sh})^{\times}/(V^{sh})^{\times}$ is a torsion free $\mathbb{Z}[G_{k}]$-module. As $G_{k}$ is a profinite group (see Proposition 4.1.3 of \cite{GS}), $G_{k}=\varprojlim {\rm Gal}(L/k),$ where the limit over all  finite Galois subextensions $L/k$ of $k^{sep}/k.$ Thus, $$H^{1}(G_{k}, K(V^{sh})^{\times}/(V^{sh})^{\times})\cong \varinjlim H^{1}({\rm Gal}(L/k), K(V^{sh})^{\times}/(V^{sh})^{\times}).$$ There are natural surjections $G_{k} \to {\rm Gal}(L/k)$ for all  finite Galois subextensions $L/k$ of $k^{sep}/k$ (see Corollary 4.1.4 of \cite{GS}). Hence, all ${\rm Gal}(L/k)$ acts trivially on $K(V^{sh})^{\times}/(V^{sh})^{\times}.$  In this situation, we know (see Exercise 6.1.5 of \cite{weihom}) $$H^{1}({\rm Gal}(L/k), K(V^{sh})^{\times}/(V^{sh})^{\times})\cong {\rm Hom}_{{\rm Groups}}({\rm Gal}(L/k), K(V^{sh})^{\times}/(V^{sh})^{\times}),$$ where ${\rm Hom}_{{\rm Groups}}(-, -)$ denotes the hom set in the category of groups.  Since ${\rm Gal}(L/k)$ is a finite group, ${\rm Hom}_{{\rm Groups}}({\rm Gal}(L/k), K(V^{sh})^{\times}/(V^{sh})^{\times})=0.$ Hence the result. \qed
    \begin{corollary}\label{one dim situation}
     Let $V$ be a one dimensional valuation ring with the field of fraction $K.$ Then the following are true:
     \begin{enumerate}
      
      \item $H^{2}_{et}({\rm Spec}(V), \mathbb{G}_{m, V})$ is a torsion group.
      \item $\Br(V)\cong H^{2}_{et}({\rm Spec}(V), \mathbb{G}_{m, V}).$
      \item the canonical map $\Br(V) \to \Br(K)$ is injective.
     \end{enumerate}
\end{corollary}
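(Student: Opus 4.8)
The plan is to derive all three parts from a single injection $H^{2}_{et}({\rm Spec}(V), \mathbb{G}_{m, V}) \hookrightarrow \Br(K)$, which the exact sequences already assembled in this section deliver almost for free. First I would substitute Theorem \ref{first coho of quotiont sheaf vanish}, that is $H^{1}_{et}({\rm Spec}(V), f_{*}\mathbb{G}_{m, K}/\mathbb{G}_{m, V})=0$, into the exact sequence (\ref{seq2}); its leftmost term vanishes and we are left with an injection $H^{2}_{et}({\rm Spec}(V), \mathbb{G}_{m, V}) \hookrightarrow H^{2}_{et}({\rm Spec}(V), f_{*}\mathbb{G}_{m, K})$. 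Composing this with the injection $H^{2}_{et}({\rm Spec}(V), f_{*}\mathbb{G}_{m, K}) \hookrightarrow \Br(K)$ recorded in (\ref{after calculation}) produces the promised injection.

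Part (1) is then immediate: because $K$ is a field, $\Br(K) \cong H^{2}_{et}({\rm Spec}(K), \mathbb{G}_{m, K})$ is a torsion group, and a subgroup of a torsion group is torsion, so $H^{2}_{et}({\rm Spec}(V), \mathbb{G}_{m, V})$ is torsion. For part (2), I would combine the always-available canonical injection $\Br(V) \hookrightarrow H^{2}_{et}({\rm Spec}(V), \mathbb{G}_{m, V})$ with Gabber's theorem (see \cite{Djong}), which identifies $\Br(V)$ with the torsion subgroup $(H^{2}_{et}({\rm Spec}(V), \mathbb{G}_{m, V}))_{tor}$; this applies since ${\rm Spec}(V)$ is affine and therefore admits an ample line bundle. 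By part (1) the entire group coincides with its torsion subgroup, so the canonical injection is forced to be an isomorphism.

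For part (3), I would identify the composite injection built in the first paragraph with the restriction (pullback-along-$f$) map $H^{2}_{et}({\rm Spec}(V), \mathbb{G}_{m, V}) \to H^{2}_{et}({\rm Spec}(K), \mathbb{G}_{m, K})$: the first arrow is induced by the adjunction unit $\mathbb{G}_{m, V} \to f_{*}\mathbb{G}_{m, K}$ and the second is the Leray edge map, and their composite is exactly restriction. I would then place this inside the naturality square
\[
\begin{CD}
\Br(V) @>>> \Br(K) \\
@VVV @VV{\cong}V \\
H^{2}_{et}({\rm Spec}(V), \mathbb{G}_{m, V}) @>>> H^{2}_{et}({\rm Spec}(K), \mathbb{G}_{m, K})
\end{CD}
\]
comparing the Azumaya Brauer groups with their cohomological counterparts (the right-hand vertical map is an isomorphism since $K$ is a field). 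As the left vertical arrow is injective and the bottom arrow is injective by the first paragraph, the canonical map $\Br(V) \to \Br(K)$ on top must be injective.

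The step I expect to be the real work is the identification in part (3): checking that the purely homological composite of the sheaf-inclusion map and the Leray edge map coincides with the honest geometric restriction map, and that the comparison square commutes. Once this functoriality is in place, parts (1) and (2) are formal, and (3) reduces to a two-out-of-three injectivity chase in the square.
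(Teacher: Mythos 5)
Your proposal is correct and follows essentially the same route as the paper: the injection $H^{2}_{et}({\rm Spec}(V), \mathbb{G}_{m, V}) \hookrightarrow \Br(K)$ obtained by combining Theorem \ref{first coho of quotiont sheaf vanish} with (\ref{seq2}) and (\ref{after calculation}) gives (1), Gabber's theorem then gives (2), and compatibility with the canonical Brauer maps gives (3). The only difference is that you spell out the identification of the composite with the restriction map and the commuting comparison square, which the paper leaves implicit in its ``immediate from'' for part (3).
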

    \begin{proof}
    (1) The Brauer group of a field always a torsion group (see Corollary 2.8.5 of \cite{GS}). Thus, $H^{2}_{et}({\rm Spec}(V), f_{*}\mathbb{G}_{m, K})$ is a torsion group by (\ref{after calculation}). Hence the result by Theorem \ref{first coho of quotiont sheaf vanish} and (\ref{seq2}).
     
     (2) By a theorem of Gabber (see \cite{Djong}), $\Br(R)\cong (H^{2}_{et}({\rm Spec}(R), \mathbb{G}_{m, R}))_{tor} $ for any commutative ring $R.$ We get the assertion by (1).
     
     (3) This is now immediate from Theorem \ref{first coho of quotiont sheaf vanish}, (\ref{after calculation}) and (\ref{seq2}).
    \end{proof}

                                                                                                                                                                                                                                    
    \section{general case} \label{main res}
    In this section, we prove the general case of Theorem \ref{ger inj}. We also discuss some consequences of Theorem \ref{ger inj}.
    
  {\it Proof of Theorem \ref{ger inj}:}                                                                                                                                                                                                                                          We may assume that $V\neq K.$ Note that $K=\bigcup K_{i},$ where $K_{i}$ is a finitely generated subfield of $K.$ Then $V$ can be wriiten as filtered limit of $V_{i},$ where $V_{i}=V\cap K_{i}$ is a valuation ring of $K_{i}$ with finite rank (see Lemma 2.22 of \cite{Bhat-Matthew}). Since $\Br$ commutes with filtered limit, we  may also assume that $V$ is a valuation ring of finite rank $d>0.$  We use induction on $d.$ If $d=1$ then the result is true by Corollary \ref{one dim situation}(3). Suppose that the result is true for all valuation rings of rank less than $d.$ Let $V$ be a valuation ring of rank $d$ with the field of fraction $K.$ Let $\mathfrak{p}$ be a nonzero, non-maximal prime ideal of $V.$ Note that ${\rm {\rm Spec}}(V_{\mathfrak{p}})$ is open in ${\rm {\rm Spec}}(V).$ Moreover, $V_{\mathfrak{p}}$ and $V/\mathfrak{p}$ both are valuation rings with the field of fractions $K$ and $k(\mathfrak{p})=V_{\mathfrak{p}}/\mathfrak{p}V_{\mathfrak{p}}.$ We also have $1\leq \dim(V_{\mathfrak{p}})< d$ and $1\leq \dim(V/\mathfrak{p})< d.$ Since $V$ is a valuation ring, the following square (see Proposition 2.8 of \cite{Bhat-Matthew})
                                                                                                                                                                                                                                             $$\begin{CD}
    V @>>> V_{\mathfrak{p}} \\
    @VVV   @VVV \\
    V/\mathfrak{p} @>>> k(\mathfrak{p})
                                                                                                                                                                                                                            \end{CD}$$
is Milnor. By Corollary 2.5 of \cite{KO}, we get an exact sequence $$ 0 \to \Br(V) \to \Br(V_{\mathfrak{p}}) \times \Br(V/\mathfrak{p}) \to \Br(k(\mathfrak{p})).$$ Now consider the following commutative diagram 
$$\begin{CD}
 0 @>>> \Br(V) @>>> \Br(V_{\mathfrak{p}}) \times \Br(V/\mathfrak{p}) \\
 @.  @VVV   @VVV   \\
  @. \Br(K) @>>> \Br(K) \times \Br(k(\mathfrak{p})),
\end{CD}$$where the second vertical map is injective by the induction hypothesis. Hence the result.\qed

\begin{corollary}\label{inj for all prime}
 Let $V$ be a valuation ring with the field of fraction $K.$ Let $\mathfrak{p}, \mathfrak{q} \in {\rm Spec}(V).$ Then either $\Br(V_{\mathfrak{p}})\hookrightarrow \Br(V_{\mathfrak{q}})$ or $ \Br(V_{\mathfrak{q}})\hookrightarrow \Br(V_{\mathfrak{p}}).$ In particular, $\Br(V) \hookrightarrow \Br(V_{\mathfrak{p}})$ for all $\mathfrak{p}\in {\rm Spec}(V).$
\end{corollary}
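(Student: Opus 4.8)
The plan is to reduce the dichotomy to Theorem \ref{ger inj} by exploiting the fact that the prime ideals of a valuation ring form a chain. First I would recall that in a valuation ring $V$ any two prime ideals are comparable under inclusion, so without loss of generality assume $\mathfrak{p} \subseteq \mathfrak{q}$. Inverting the smaller multiplicative set $V \setminus \mathfrak{q}$ yields a chain of subrings $V_{\mathfrak{q}} \subseteq V_{\mathfrak{p}} \subseteq K$, and in fact $V_{\mathfrak{p}}$ is the localization of $V_{\mathfrak{q}}$ at the prime $\mathfrak{p}V_{\mathfrak{q}}$. As recorded in the introduction, both $V_{\mathfrak{p}}$ and $V_{\mathfrak{q}}$ are valuation rings with field of fractions $K$.

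The key step is to observe that the canonical map $\Br(V_{\mathfrak{q}}) \to \Br(K)$ factors through $\Br(V_{\mathfrak{p}})$. By functoriality of $\Br$ applied to the inclusions $V_{\mathfrak{q}} \hookrightarrow V_{\mathfrak{p}} \hookrightarrow K$, the composite $\Br(V_{\mathfrak{q}}) \to \Br(V_{\mathfrak{p}}) \to \Br(K)$ coincides with the canonical map $\Br(V_{\mathfrak{q}}) \to \Br(K)$. Since $V_{\mathfrak{q}}$ is itself a valuation ring with fraction field $K$, Theorem \ref{ger inj} guarantees this composite is injective. An injective composite forces its first arrow to be injective, so $\Br(V_{\mathfrak{q}}) \to \Br(V_{\mathfrak{p}})$ is injective, i.e. $\Br(V_{\mathfrak{q}}) \hookrightarrow \Br(V_{\mathfrak{p}})$. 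This settles the case $\mathfrak{p} \subseteq \mathfrak{q}$; the case $\mathfrak{q} \subseteq \mathfrak{p}$ is symmetric.

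For the final assertion, I would apply the same reasoning to the chain $V = V_{\mathfrak{m}} \hookrightarrow V_{\mathfrak{p}} \hookrightarrow K$, where $\mathfrak{m}$ is the maximal ideal of $V$ and $\mathfrak{p} \subseteq \mathfrak{m}$ holds automatically. Concretely, the composite $\Br(V) \to \Br(V_{\mathfrak{p}}) \to \Br(K)$ is the canonical injection of Theorem \ref{ger inj}, whence $\Br(V) \hookrightarrow \Br(V_{\mathfrak{p}})$.

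I do not expect a genuine obstacle, as the corollary is a formal consequence of Theorem \ref{ger inj} together with the linear ordering of the prime spectrum. The only points demanding care are fixing the direction of the inclusion of localizations (the larger prime produces the smaller valuation ring) and verifying, via functoriality, that the relevant triangle of Brauer-group maps commutes, so that injectivity of the composite descends to the first arrow.
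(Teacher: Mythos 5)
Your proposal is correct and follows the same route as the paper: comparability of primes in a valuation ring gives $V_{\mathfrak{q}} \subseteq V_{\mathfrak{p}} \subseteq K$ (say), and injectivity of $\Br(V_{\mathfrak{q}}) \to \Br(K)$ from Theorem \ref{ger inj} forces injectivity of the first arrow in the factorization through $\Br(V_{\mathfrak{p}})$. The paper's proof is just a terser version of the same argument, leaving the factorization step implicit.
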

\begin{proof}
 Since $V$ is a valuation ring, either $\mathfrak{p}\subseteq \mathfrak{q}$ or $\mathfrak{q}\subseteq \mathfrak{p}.$ Note that the both $V_{\mathfrak{p}}$ and $V_{\mathfrak{q}}$ are valuation rings with the field of fraction $K.$ The assertion now clear from Theorem \ref{ger inj}. For the second part, consider $\mathfrak{p}\subseteq \mathfrak{m},$ where $\mathfrak{m}$ is the unique maximal ideal.
\end{proof}
\begin{remark}\label{chain of grps}{\rm
 For a valuation ring $V,$ we have exactly one chain of prime ideals, say 
 $$(0) \subseteq \mathfrak{p}\subseteq \dots \subseteq \mathfrak{q} \subseteq \mathfrak{m}.$$ By Corollary \ref{inj for all prime}, we get $$\Br(V)\hookrightarrow Br(V_{\mathfrak{q}})\hookrightarrow \dots \hookrightarrow \Br(V_{\mathfrak{p}})\hookrightarrow \Br(K).$$ In particular, $\Br(V)=\bigcap_{\mathfrak{p}\in {\rm Spec}(V)} \Br(V_{\mathfrak{p}}).$}
\end{remark}

\begin{remark}{\rm
 Kestutis $\check{{\rm C}}$esnavi$\check{{\rm c}}$ius inform me in \cite{KC} that a stronger version of Theorem \ref{ger inj} is also true over valuation rings. More explicitly, for a valuation ring $V$ with the field of fraction $K,$ the canonical map $H_{et}^{2}({\rm Spec}(V), \mathbb{G}_{m, V}) \to H_{et}^{2}({\rm Spec}(K), \mathbb{G}_{m, K})$ is injective.}
\end{remark}

\end{document}